\begin{document}

%%%%%%%%%%%%%%%% Insert metadata of the paper

% \selectlanguage{ukrainian}
\selectlanguage{english}

% Set the title of the paper:
% If yout title is long and will not fit to hte top margin of the paper
% then specify a short title in square brackets
\title[Sets of distinct representations in redundant numeral systems]{Sets of distinct representations of numbers in numeral systems with a natural base and a redundant alphabet}

% Your title may contain line breaks like '\\'
% In that case define the following command including there your title without such symbols '\\'
% \contentsTitle{}

%%% information on the first author:
%%  \author[Name which will be printed at the end of the paper]
%%         {Name printed in the paper title}
\author[M. Pratsiovytyi]{Mykola Pratsiovytyi}
%\department{Department of Algebra}
\organization{Institute of Mathematics of NAS of Ukraine, Dragomanov Ukrainian State University}
%\address{Ukraine}
\email{prats4444@gmail.com}
\orcid{0000-0001-6130-9413}

%%%%%%%%%%%% information on the second author:
\author[O. Vynnyshyn]{Oleh Vynnyshyn}
% \department{...}
\organization{Institute of Mathematics of NAS of Ukraine}
% \address{...}
\email{oleh.vynnyshyn@imath.kiev.ua}
\orcid{0009-0006-7113-1501}

%%%%%%%%%%%% general information about the paper
% list of authors in the header of the paper
\shortAuthorsList{M.~Pratsiovytyi, O.~Vynnyshyn}

%%%%%%%%%%%% similarly add information about all other authors

%%%%%%%%%%%% abstracts
\abstract{english}{
In this work, we study a numeral system with a natural base $s \geq 2$ and a redundant alphabet $A_r=\{0,1, \dots, r\}$, where $s \leq r \leq 2s-2$. We investigate the topological, metric, and fractal properties of the set of numbers in the interval $\left[0,\frac{r}{s-1}\right]$ that admit a unique representation $x=\sum\limits_{n=1}^{\infty}\frac{\alpha_n} {s^n}\equiv\Delta^{r_s}_{\alpha_1\alpha_2...\alpha_n...}$, $\alpha_n\in A_r$. The criterion for the uniqueness of the number representation is established. It is proved that the Hausdorff--Besicovitch dimension of the set of numbers with a unique representation is equal to $\frac{\ln(2s-r-1)}{\ln s}$. An analysis of the quantity of representations of numbers having purely periodic representations
with a simple period (a single-digit period) is carried out. It is proved that the set of numbers that admit a continuum of distinct representations has full Lebesgue measure. Conditions for a number to belong to this set are given in terms of one of its representations.}

%\abstract{ukrainian}{У роботі для системи числення з натуральною основою $s \geq 2$ і надлишковим алфавітом $A_r=\{0,1, \dots, r\}$, $s\leq r\leq 2s-2$, вивчаються тополого-метричні і фрактальні властивості множин чисел відрізка $[0;\frac{r}{s-1}]$, які мають єдине зображення: $x=\sum\limits_{n=1}^{\infty}\frac{\alpha_n}{s^n}\equiv\Delta^{r_s}_{\alpha_1\alpha_2...\alpha_n...}$, $\alpha_n\in A_r$. Обґрунтовано критерій єдиності зображення числа, доведено, що розмірність Гаусдорфа-Безиковича множини чисел з єдиним зображенням рівна $\frac{\ln(2s-r-1)}{\ln s}$. Наведено результати аналізу кількості зображень у чисел, які мають чисто періодичне зображення з простим періодом (періодом з однієї цифри). Доведено, що множина чисел, які мають континуальну множину різних зображень, є множиною повної міри Лебега. Наведено умови належності чисел цій множині за одним з їх зображень.}

% \dedication{...}
\thanks{This work was supported by a grant from the Simons Foundation (SFI-PD-Ukraine-00014586, P.M., V.O.).}
\keywords{A numeral system with a natural base and a redundant alphabet; numbers that have a single representation; the set of numbers that have a finite quantity of representations; Cantor set type; the Hausdorff-Besicovitch dimension. %Cистема числення з натуральною основою та надлишковим алфавітом; числа, які мають єдине зображення; множина чисел, що мають континуальну кількість зображень; множина канторівського типу, розмірність Гаусдорфа-Безиковича
}
\msc{28A80, 11A67, 28E15}
%
% \doi{}
% \received{}
% \accepted{}

\maketitle

%%% Text of the paper

\section{Introduction}
Let $s,r$ be fixed natural numbers, $2 \leq s \leq r \leq 2s-2$; let  
$A_r = \{0,1,\dots,r\}$ be an alphabet (a set of digits), and  
$L_r = A_r \times A_r \times \cdots$ be the set of sequences with elements from the alphabet.  
It is known~\cite{3_redundant_alph_Honcharenko,1_dist_rv} that for any number 
$x \in \left[0;\frac{r}{s-1}\right]$ there exists a sequence $(a_k) \in L_r$ such that
\begin{equation}\label{ser1}
  x = \sum_{k=1}^{\infty} a_k s^{-k} =
  \Delta^{r_s}_{a_1 a_2 \dots a_k \dots}.
\end{equation}
The symbolic (abbreviated) notation
$x = \Delta^{r_s}_{a_1 a_2 \dots a_n \dots}$ of the number expansion into the series~\eqref{ser1}
is called the \emph{$r_s$-representation} of the number $x$, or the representation of $x$
in the numeral system with base $s$ and alphabet $A_r$.
Hereafter, parentheses in the representation of a number denote a period.
Obviously,
\begin{equation}\label{rozkladdelta}
\Delta^{r_s}_{a_1a_2 \ldots }=
\Delta^{r_s}_{a_1a_2 \ldots a_m(0)}+
\Delta^{r_s}_{0\ldots0a_{m+1}a_{m+2}\ldots}
=
\Delta^{r_s}_{a_1a_2 \ldots a_m(0)}+\frac{1}{s^m}
\Delta^{r_s}_{a_{m+1}a_{m+2}\ldots}, 
\end{equation}
\[\Delta^{r_s}_{\alpha_1\alpha_2 \ldots \alpha_n...}+
\Delta^{r_s}_{\beta_1\beta_2 \ldots \beta_n \ldots}=
\Delta^{r_s}_{[\alpha_1+\beta_1][\alpha_2+\beta_2] 
\ldots [\alpha_n+\beta_n] \ldots}, 
[\alpha_n+\beta_n]\in A_r.\]
If
\[\frac{a}{s} + \frac{b}{s^2} = \frac{c}{s} + \frac{d}{s^2},
\quad \text{which is equivalent to} \quad s(a-c)=d-b,\]
then the pairs $(a,b)$ and $(c,d)$ in the representation of a number are  mutually replaceable (alternative).

The redundancy of the alphabet $A_r$ guarantees existence of numbers that have an infinite set and even a continuum of different $r_s$-representations~\cite{14_Komornik}.
Such systems of encoding real numbers~\cite{10_encod_systems} have been studied in various works,
in particular in~\cite{3_redundant_alph_Honcharenko,15_Mykytiuk}.
However,  we did not find a complete answer to the question of the count of different representations
of a number from the interval $\left[0;\frac{r}{s-1}\right]$.
Therefore, we continue the previously initiated research, specifying certain results
and providing more detailed arguments.

Note that numeral systems with a natural base and a redundant alphabet have been used
in the distribution theory of random variables, in particular for infinite Bernoulli
convolutions~\cite{1_dist_rv,11_convolutions,13_Makarchuk},
in fractal theory~\cite{4_Pratsiovytyi},
in the geometry of numerical series~\cite{0_vmps_kantorval,12_Karvatskyi2024},
in the theory of locally complex functions with fractal properties~\cite{9_webtype_curve},
and so on. They arise naturally at the intersection of these theories.

\section{Numbers with purely periodic representations and a simple period}

\begin{lemma}
Every number $x_0 \in \left[0;\frac{r}{s-1}\right]$ that has a purely periodic $r_s$-representation
with period $(c)$, where
$c \in \{0,r\} \cup \{r-s+2, \ldots, s-2\}$, has a unique representation.
\end{lemma}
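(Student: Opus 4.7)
\medskip

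\noindent\textbf{Plan.} Let $x_0 = \Delta^{r_s}_{(c)}$ and suppose $(b_k) \in L_r$ is any $r_s$-representation of $x_0$. I will show $b_k = c$ for every $k$, i.e. the representation is forced. Setting $c_k := c - b_k$, the equality of the two series gives
\[
\sum_{k=1}^{\infty} \frac{c_k}{s^k} = 0,
\qquad c_k \in \{c-r,\,c-r+1,\,\ldots,\,c-1,\,c\}.
\]
The goal becomes: under the hypothesis on $c$, this identity forces $c_k = 0$ for all $k$.

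\medskip

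\noindent The argument splits according to which part of the admissible set $c$ lies in. If $c = 0$, then every $c_k = -b_k \leq 0$, and a sum of non-positive terms vanishing forces each term to be zero; if $c = r$, the symmetric argument with $c_k \geq 0$ applies. The substantive case is $c \in \{r-s+2,\ldots,s-2\}$. Here $c \leq s-2$ gives $c_k \leq c \leq s-2$, while $c \geq r-s+2$ gives $c_k \geq c - r \geq -(s-2)$, so $|c_k| \leq s-2$ for every $k$.

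\medskip

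\noindent Now suppose toward a contradiction that some $c_k \neq 0$, and let $m$ be the smallest such index. Isolating the $m$-th term and estimating the remaining geometric tail yields
\[
\frac{|c_m|}{s^m} \;=\; \Bigl|\sum_{k>m} \frac{c_k}{s^k}\Bigr| \;\leq\; \sum_{k=m+1}^{\infty} \frac{s-2}{s^k} \;=\; \frac{s-2}{(s-1)\,s^m} \;<\; \frac{1}{s^m}.
\]
So $|c_m| < 1$, contradicting $c_m \in \mathbb{Z} \setminus \{0\}$. Hence all $c_k$ vanish, the two representations coincide, and uniqueness is proved.

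\medskip

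\noindent The proof is essentially a tail-bound argument; no genuine obstacle is expected. The one point deserving care is the verification that the hypothesis on $c$ is \emph{exactly} what makes $|c_k| \leq s-2$ work --- the strict inequality $(s-2)/(s-1) < 1$ is what allows us to rule out $c_m \neq 0$, which is why the middle-range endpoints $r-s+2$ and $s-2$ (rather than $r-s+1$ and $s-1$) appear in the statement.
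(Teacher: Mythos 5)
Your proof is correct and follows essentially the same strategy as the paper's: locate the first index of disagreement and show the tail cannot compensate, using precisely the bounds $c\leq s-2$ and $c\geq r-s+2$ (the paper splits into the cases $\alpha_k>c$ and $\alpha_k<c$ and compares against the extremal tails $(0)$ and $(r)$, which amounts to the same estimate $\tfrac{s-2}{s-1}<1$ that you package into a single absolute-value inequality). Your unified difference-series formulation is a clean equivalent of the paper's two-case argument.
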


\begin{proof}
If $c \in \{0, r\}$, then the statement is obvious.
Consider the number $x_0 = \Delta^{r_s}_{(r-s+1+m)}$, where
$m \in \{1,2,\ldots,2s-r-3\}$. Assume that there exists another $r_s$-representation
$\Delta^{r_s}_{\alpha_1 \alpha_2 \ldots \alpha_n \ldots}$ of the number $x_0$.
Since the representations are different, there is the smallest $k$ such that
$\alpha_k \neq c$.
We proceed by considering the following cases:

\medskip

1. If $\alpha_k > c$, then, taking into account~\eqref{rozkladdelta}, we have
\[\Delta^{r_s}_{\alpha_{k} \alpha_{k+1} \alpha_{k+2} \ldots} \geq 
  \Delta^{r_s}_{\alpha_{k}(0)} \geq
  \Delta^{r_s}_{[c+1](0)} =
  \Delta^{r_s}_{c(s-1)} >
  \Delta^{r_s}_{c(c)},\]
since $c < s-1$.
This yields a contradiction to the fact that
$\Delta^{r_s}_{\alpha_1 \alpha_2 \ldots \alpha_n \ldots}$ and
$\Delta^{r_s}_{(c)}$
are different representations of the same number.

\medskip

2. If $\alpha_k < c$, then
\[\Delta^{r_s}_{\alpha_{k} \alpha_{k+1} \alpha_{k+2} \ldots} \leq 
\Delta^{r_s}_{\alpha_{k}(r)} \leq
\Delta^{r_s}_{[c-1](r)} =
\Delta^{r_s}_{c(r-(s-1))} <
\Delta^{r_s}_{c(c)},\]
since $c > r-(s-1)$.
This again leads to a contradiction that
$\Delta^{r_s}_{\alpha_1 \alpha_2 \ldots \alpha_n \ldots}$ and
$\Delta^{r_s}_{(c)}$
are representations of the same number.
\end{proof}
\begin{lemma}\label{periodic_continuum}
Every number $x_0 \in \left[0;\frac{r}{s-1}\right]$ that has a purely periodic $r_s$-representation
with period $(c)$, where
$c \in \{1,2,\ldots,r-s,s,s+1,\ldots,r-1\}$,
has a continuum of distinct representations.
\end{lemma}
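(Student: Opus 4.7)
The idea is to exploit the local exchange rule pointed out in the introduction: since $s(a-c)=d-b$ guarantees $\frac{a}{s}+\frac{b}{s^{2}}=\frac{c}{s}+\frac{d}{s^{2}}$, any two consecutive digits $(a,b)$ in an $r_s$-representation may be swapped with $(c,d)$ whenever this relation holds and both new symbols lie in $A_r$. Applied to the constant pair $(c,c)$, the only natural alternatives are $(c-1,c+s)$ and $(c+1,c-s)$, and the two disjoint ranges $\{1,\dots,r-s\}$ and $\{s,\dots,r-1\}$ in the hypothesis are precisely the ones that make one of these alternatives available.

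The plan is as follows. First, I would verify that for $c\in\{1,\dots,r-s\}$ one has $c-1\in A_r$ and $c+s\le r$, so $(c,c)$ may legitimately be replaced by $(c-1,c+s)$; symmetrically, for $c\in\{s,\dots,r-1\}$ one has $c+1\le r$ and $c-s\ge 0$, so $(c,c)$ may be replaced by $(c+1,c-s)$. In either case the swap preserves the value of $x_0$ by the identity above.

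Next, I would partition $\mathbb{N}$ into the disjoint pairs $B_k=\{2k-1,2k\}$, $k\ge 1$. Because these blocks are non-overlapping, the swap may be performed independently on each $B_k$. To every sequence $\varepsilon=(\varepsilon_k)\in\{0,1\}^{\mathbb{N}}$ I would associate the $r_s$-representation obtained by leaving the block $B_k$ as $(c,c)$ when $\varepsilon_k=0$ and replacing it with the chosen alternative pair when $\varepsilon_k=1$. By the previous paragraph each resulting sequence still represents $x_0$.

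Finally, I would check injectivity of $\varepsilon\mapsto(\text{representation})$: if $\varepsilon\neq\varepsilon'$ and $k$ is the smallest index at which they differ, then the two sequences of digits differ at position $2k-1$ (one has $c$, the other has $c\mp 1$), hence are distinct. Since $\{0,1\}^{\mathbb{N}}$ has the cardinality of the continuum, this exhibits continuum many pairwise distinct $r_s$-representations of $x_0$. There is no real obstacle in the argument; the only point requiring attention is to confirm that the replacement digits stay in $A_r$, which is exactly what the dichotomy $c\le r-s$ versus $c\ge s$ in the hypothesis encodes.
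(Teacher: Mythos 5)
Your proposal is correct and follows essentially the same approach as the paper: replacing the pair $(c,c)$ by $(c-1,c+s)$ or $(c+1,c-s)$ at infinitely many positions. You merely make explicit what the paper leaves implicit, namely the choice of disjoint blocks and the injectivity of the map from $\{0,1\}^{\mathbb{N}}$, which is a welcome clarification but not a different argument.
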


\begin{proof}
If $c \in \{1,2,\ldots,r-s\}$, then for each pair of adjacent digits $cc$
there exists a mutually replaceable pair $(c-1, c+s)$.
If $c \in \{s,s+1,\ldots,r-1\}$, then for the pair $cc$
the alternative one is $(c+1, c-s)$.
Since there are infinitely many positions of adjacent digit pairs
in the $r_s$-representation of the number $x_0$ with alternatives,
we conclude that there exists a continuum of distinct representations of $x_0$.
\end{proof}
\begin{lemma}\label{periodic_infinite}
Every number $x_0 \in \left[0;\frac{r}{s-1}\right]$ that has a purely periodic $r_s$-representation
with period $(c)$, where
$c \in \{r-s+1, s-1\}$,
has an infinite set of distinct $r_s$-representations.
\end{lemma}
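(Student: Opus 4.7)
My plan is to treat each of the two values of $c$ by exhibiting, for each integer $k \geq 0$, an explicit $r_s$-representation of $x_0$, and then checking that the resulting countable family is pairwise distinct. A preliminary observation explains why this is the right strategy: unlike in Lemma~\ref{periodic_continuum}, no single pair-swap admissible by the identity preceding~\eqref{rozkladdelta} applies to $(c,c)$ when $c \in \{r-s+1,\, s-1\}$, because the partner digits always fall outside $A_r$ (they overshoot $r$ or dip below $0$). Hence the modification must alter an arbitrarily long initial segment, not merely two adjacent digits.

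For $c = s-1$, the geometric series gives $x_0 = (s-1)\sum_{k\geq 1} s^{-k} = 1$. Using $r \geq s$, so $s \in A_r$, I would exhibit, for every $k \geq 0$,
\[
x_0 \;=\; \Delta^{r_s}_{\underbrace{(s-1)(s-1)\ldots(s-1)}_{k \text{ times}}\, s\, (0)},
\]
which follows from the identity $\sum_{i>k}(s-1)/s^{i} = 1/s^{k} = s/s^{k+1}$. As $k$ varies, these representations differ at position $k+1$, so they are pairwise distinct.

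For $c = r-s+1$, we have $x_0 = (r-s+1)/(s-1)$. A quick route is the digit-complementation symmetry $\sigma\colon(\alpha_n)\mapsto(r-\alpha_n)$, which by the additive identity displayed just after~\eqref{rozkladdelta} is a bijection between the $r_s$-representations of $x$ and those of $\tfrac{r}{s-1}-x$; since $\sigma$ sends the period $(c)$ to the period $(r-c) = (s-1)$, the multiplicity of representations of $x_0$ equals that of $1$, and the previous case concludes. A self-contained alternative exhibits directly the dual family, using that $r-s \in A_r$ and $r \in A_r$ (both from $r \geq s$): for every $k \geq 0$,
\[
x_0 \;=\; \Delta^{r_s}_{\underbrace{(r-s+1)(r-s+1)\ldots(r-s+1)}_{k \text{ times}}\,(r-s)\,(r)},
\]
verified by a routine geometric-series computation.

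The main (mild) obstacle is the opening observation: realising that no local pair-swap is available at these boundary values of $c$, and then spotting the correct finite-prefix-plus-tail modification. Once the identity $(s-1)/s + (s-1)/s^2 + \ldots = s/s$ (and its dual form) is in hand, the remaining steps---verification of equality, distinctness within each family, and the $\sigma$-reduction of the second case to the first---are all immediate.
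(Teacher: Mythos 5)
Your proposal is correct and follows essentially the same route as the paper: both exhibit, for each $k$, the representation $\Delta^{r_s}_{[s-1]\ldots[s-1]s(0)}$ with $k$ leading digits $s-1$ (and the dual family $\Delta^{r_s}_{[r-s+1]\ldots[r-s+1][r-s](r)}$ for $c=r-s+1$), yielding a countable family of pairwise distinct representations. Your added details---the explicit geometric-series verification, the distinctness check at position $k+1$, and the optional reduction of the second case to the first via digit complementation---are all sound but do not change the argument.
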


\begin{proof}
If $c = s-1$, then the equalities
    \[\Delta^{r_s}_{(s-1)} =
    \Delta^{r_s}_{s(0)} = 
    \Delta^{r_s}_{[s-1]s(0)}=
    \Delta^{r_s}_{[s-1][s-1]\ldots[s-1]s(0)}\]
hold, and the given number has an infinite set of representations.
If $c = r-s+1$, then, similarly,
    \[\Delta^{r_s}_{(r-s+1)} =
    \Delta^{r_s}_{[r-s](r)} =
    \Delta^{r_s}_{[r-s+1][r-s](r)}  =
    \Delta^{r_s}_{[r-s+1]\ldots[r-s+1][r-s](r)}.\]
Thus, the number $x_0$ has an infinite set of representations.
\end{proof}

\section{Numbers with a unique representation}

The following statement is obvious.

\begin{lemma}\label{uniq_inversed}
The number
$x = \Delta^{r_s}_{\alpha_1 \alpha_2 \ldots \alpha_n \ldots}$
has a unique $r_s$-representation if and only if the number
$x' = \Delta^{r_s}_{[r-\alpha_1][r-\alpha_2]\ldots[r-\alpha_n]\ldots}$
has a unique representation.
\end{lemma}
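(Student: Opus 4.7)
The plan is to exhibit an explicit bijection between the sets of $r_s$-representations of $x$ and of $x'$, induced by the digit involution $\sigma\colon A_r\to A_r$, $\sigma(a)=r-a$. Since $a\in\{0,1,\ldots,r\}$ if and only if $r-a\in\{0,1,\ldots,r\}$, the map $\sigma$ is a well-defined involution on $A_r$, and it extends coordinatewise to an involution $\Sigma\colon L_r\to L_r$, $\Sigma((\beta_k))=(r-\beta_k)$.

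Next, I would carry out the key computation: for any sequence $(\beta_k)\in L_r$,
\[
\sum_{k=1}^{\infty}(r-\beta_k)s^{-k}
=r\sum_{k=1}^{\infty}s^{-k}-\sum_{k=1}^{\infty}\beta_k s^{-k}
=\frac{r}{s-1}-\sum_{k=1}^{\infty}\beta_k s^{-k}.
\]
Thus $(\beta_k)$ is an $r_s$-representation of a number $y$ if and only if $\Sigma((\beta_k))$ is an $r_s$-representation of $\tfrac{r}{s-1}-y$. Applying this to the given representation of $x$, we obtain $x'=\Delta^{r_s}_{[r-\alpha_1][r-\alpha_2]\ldots}=\tfrac{r}{s-1}-x$, so the involution $\Sigma$ restricts to a bijection between the set of $r_s$-representations of $x$ and the set of $r_s$-representations of $x'$.

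From this bijection the conclusion is immediate: the two representation sets have the same cardinality, and in particular one is a singleton if and only if the other is. There is no substantive obstacle; the statement is essentially the reflection symmetry of the coding $L_r\to\bigl[0,\tfrac{r}{s-1}\bigr]$ about the midpoint $\tfrac{r}{2(s-1)}$, and the only point requiring care is verifying that the digit inversion $a\mapsto r-a$ genuinely preserves the alphabet $A_r$, which is immediate from its definition.
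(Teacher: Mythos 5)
Your proof is correct and follows essentially the same route as the paper: the paper also maps a second representation $\Delta^{r_s}_{\beta_1\beta_2\ldots}$ of $x$ to the representation $\Delta^{r_s}_{[r-\beta_1][r-\beta_2]\ldots}$ of $x'$ and back. You merely make explicit the computation $\sum_{k}(r-\beta_k)s^{-k}=\frac{r}{s-1}-\sum_k\beta_k s^{-k}$ and phrase the correspondence as an involution/bijection, which is a slightly more careful write-up of the same idea.
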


Indeed, if the number $x$ has another representation
$\Delta^{r_s}_{\beta_1 \ldots \beta_n \ldots}$, where
$\beta_k \neq \alpha_k$ for some $k$, then
$\Delta^{r_s}_{[r-\beta_1][r-\beta_2]\ldots[r-\beta_n]\ldots}$
is another representation of the number $x'$.
Conversely, the same holds in the opposite direction.
This contradicts the uniqueness of the $r_s$-representation of the number.

\begin{theorem}[Criterion of the uniqueness of a number representation]
\label{criterion_of_uniq}
For a number
$x = \Delta^{r_s}_{\alpha_1 \alpha_2 \ldots \alpha_n \ldots}$
in the interval $\left(0,\frac{r}{s-1}\right)$
to have a unique $r_s$-representation, it is necessary and sufficient that,
for every $k \in \mathbb{N}$, the following two conditions hold:

\begin{enumerate}
\item
$\Delta^{r_s}_{\alpha_{k+1} \alpha_{k+2} \ldots \alpha_{k+n} \ldots} < 1$
or $\alpha_k = r$;

\item
$\Delta^{r_s}_{[r-\alpha_{k+1}] [r-\alpha_{k+2}] \ldots [r-\alpha_{k+n}] \ldots} < 1$
or $\alpha_k = 0$.
\end{enumerate}
\end{theorem}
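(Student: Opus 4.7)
Everything rests on a single bookkeeping identity. If $(\alpha_n)$ and $(\beta_n)$ are two distinct $r_s$-representations of the same $x$ and $k$ is the smallest index at which they differ, then equating the two series, cancelling the common prefix, and multiplying by $s^k$ gives
\[
\alpha_k + \Delta^{r_s}_{\alpha_{k+1}\alpha_{k+2}\ldots} = \beta_k + \Delta^{r_s}_{\beta_{k+1}\beta_{k+2}\ldots}.
\]
Conditions~(1) and~(2) will appear as the exact obstructions to satisfying this identity by incrementing, respectively decrementing, the digit $\alpha_k$. The complementary identity $\Delta^{r_s}_{\gamma_1\gamma_2\ldots} + \Delta^{r_s}_{[r-\gamma_1][r-\gamma_2]\ldots} = \Delta^{r_s}_{(r)} = r/(s-1)$, together with Lemma~\ref{uniq_inversed}, lets me treat the two directions symmetrically.

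\textbf{Necessity.} Suppose $x$ has a second representation $(\beta_n)$ with first discrepancy at position $k$. If $\beta_k > \alpha_k$, the positive integer $\beta_k-\alpha_k \geq 1$ combined with the identity forces $\Delta^{r_s}_{\alpha_{k+1}\ldots} \geq 1 + \Delta^{r_s}_{\beta_{k+1}\ldots} \geq 1$ and $\alpha_k \leq r-1$, so condition~(1) fails at~$k$. In the case $\beta_k < \alpha_k$ one symmetrically obtains $\Delta^{r_s}_{\beta_{k+1}\ldots} \geq 1 + \Delta^{r_s}_{\alpha_{k+1}\ldots}$, hence $\Delta^{r_s}_{\alpha_{k+1}\ldots} \leq r/(s-1)-1$; the complementary identity then yields $\Delta^{r_s}_{[r-\alpha_{k+1}]\ldots} \geq 1$, while $\alpha_k \geq \beta_k+1 \geq 1$, i.e.\ condition~(2) fails.

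\textbf{Sufficiency.} Suppose condition~(1) fails at some $k$, i.e.\ $\alpha_k \leq r-1$ and $y := \Delta^{r_s}_{\alpha_{k+1}\ldots} - 1 \geq 0$. Since $\Delta^{r_s}_{\alpha_{k+1}\ldots} \leq r/(s-1)$, one has $y \in [0,\,r/(s-1)]$, so by the existence result for $r_s$-representations recalled after~\eqref{ser1} there are digits $(\gamma_n) \in L_r$ with $y = \Delta^{r_s}_{\gamma_1\gamma_2\ldots}$. The sequence $(\alpha_1,\ldots,\alpha_{k-1},\alpha_k+1,\gamma_1,\gamma_2,\ldots)$ then represents $x$ and differs from $(\alpha_n)$ at position $k$. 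If condition~(2) fails at~$k$, one either performs the mirror construction with $y := \Delta^{r_s}_{\alpha_{k+1}\ldots}+1 \leq r/(s-1)$ and $\beta_k = \alpha_k-1$, or, more quickly, applies the case just handled to $x'$ and transfers the second representation back via Lemma~\ref{uniq_inversed}.

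\textbf{Main obstacle.} There is no deep conceptual difficulty once the identity above is isolated; the only delicate points are verifying that the adjusted tail value $y$ stays inside $[0,\,r/(s-1)]$ in each case, so that an $r_s$-representation of $y$ is available to splice in, and carefully matching the direction of the digit swap (up or down) with the correct condition~(1) or~(2).
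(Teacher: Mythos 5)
Your argument is correct, and both directions of the equivalence are covered, but note first a labeling slip: your ``Necessity'' paragraph (second representation exists $\Rightarrow$ condition~(1) or~(2) fails) is the contrapositive of \emph{sufficiency} of the conditions, while your ``Sufficiency'' paragraph (a condition fails $\Rightarrow$ a second representation can be built) is the contrapositive of \emph{necessity}; the headings should be swapped. Mathematically, your sufficiency half is essentially the paper's: the paper also takes the first index of discrepancy and compares $\Delta^{r_s}_{\beta_k\beta_{k+1}\ldots}$ with $\Delta^{r_s}_{\alpha_k\alpha_{k+1}\ldots}$ by bounding tails between $\Delta^{r_s}_{(0)}$ and $\Delta^{r_s}_{(r)}$ and using $\Delta^{r_s}_{(r)}-1=\Delta^{r_s}_{(r-s+1)}$; your cancellation identity $\alpha_k+\Delta^{r_s}_{\alpha_{k+1}\ldots}=\beta_k+\Delta^{r_s}_{\beta_{k+1}\ldots}$ packages the same inequalities more compactly. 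The genuine difference is in the necessity half: the paper constructs the second representation by an explicit digit manipulation --- splitting into the subcases $x_k=1$ (rewrite the tail $(s-1)$ as $[s-1]s(0)$) and $x_k>1$ (locate the first digit $\alpha_{k+j}>s-1$ and perform the borrow $[\alpha_{k+j-1}+1][\alpha_{k+j}-s]$) --- whereas you simply observe that $y=\Delta^{r_s}_{\alpha_{k+1}\ldots}-1$ lies in $\left[0,\frac{r}{s-1}\right]$ and invoke the surjectivity of the coding map (cited after~\eqref{ser1}) to splice in \emph{any} representation of $y$ after the incremented digit $\alpha_k+1$. Your route is shorter and avoids the case analysis, at the cost of leaning on the existence theorem as a black box rather than exhibiting the alternative representation explicitly; both treatments of condition~(2) via the reflection $x\mapsto\frac{r}{s-1}-x$ and Lemma~\ref{uniq_inversed} coincide. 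The boundary checks you flag ($y\in\left[0,\frac{r}{s-1}\right]$, $\alpha_k+1\le r$, $\alpha_k-1\ge 0$) are exactly the right ones and all go through.
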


\begin{proof}
\textbf{Necessity.}
Let the number
$x = \Delta^{r_s}_{\alpha_1 \alpha_2 \ldots \alpha_n \ldots}$
have a unique $r_s$-representation.
We prove that conditions 1) and 2) are met.

Since $0 \neq x \neq \frac{r}{s-1}$, there exists $k$ such that
$\alpha_k \neq r$.
Consider the number
$x_k = \Delta^{r_s}_{\alpha_{k+1} \ldots \alpha_{k+n} \ldots}$.
Assume that $x_k \geq 1$.

If $x_k = 1$, then
$x_k = \Delta^{r_s}_{(s-1)} =
\Delta^{r_s}_{[s-1]s(0)}$,
and hence
$x = \Delta^{r_s}_{\alpha_1 \ldots \alpha_k (s-1)} =
\Delta^{r_s}_{\alpha_1 \ldots \alpha_k [s-1]s(0)}$,
which contradicts the uniqueness of the $r_s$-representation of $x$.

Assume that $x_k > 1$.
Then among the digits of the representation of $x_k$
there exists $\alpha_{k+j} > s-1$; we take the smallest such $j$.
If $j = 1$, then
\[ x = \Delta^{r_s}_{\alpha_1 \ldots \alpha_{k-1}
[\alpha_k+1][\alpha_{k+1}-s]\alpha_{k+2}\alpha_{k+3}\ldots}. \]

If $\alpha_{k+1} \leq s-1$, then
\[ x = \Delta^{r_s}_{\alpha_1 \ldots \alpha_k \alpha_{k+1} \ldots
[\alpha_{k+j-1}+1] [\alpha_{k+j}-s] \alpha_{k+j+1}\ldots}. \]
In both cases we obtain another $r_s$-representation of the number $x$,
which contradicts its uniqueness.
These contradictions prove condition~1).

Condition~2) is proved analogously. Moreover, it follows directly
from condition~1) and the previous lemma.

\medskip

\textbf{Sufficiency.}
Assume that for the number
$x = \Delta^{r_s}_{\alpha_1 \alpha_2 \ldots \alpha_n \ldots}$
both conditions~1) and~2) hold for every $k \in \mathbb{N}$,
and that there exists another representation
$x = \Delta^{r_s}_{\beta_1 \beta_2 \ldots \beta_n \ldots}$.
Then there exists $k \in \mathbb{N}$ such that
$\alpha_k \neq \beta_k$, while $\alpha_j = \beta_j$ for all $j < k$.

Suppose that $\alpha_k > \beta_k$.
Then $\alpha_k \neq 0$, and by condition~2) we have
\[\Delta^{r_s}_{(r)} - \Delta^{r_s}_{\alpha_{k+1} \alpha_{k+2} \ldots} 
=\Delta^{r_s}_{[r - \alpha_{k+1}] [r-\alpha_{k+2}] \ldots [r-\alpha_{k+n}] \ldots} < 1.\]
Then
$\Delta^{r_s}_{(r)}-1<\Delta^{r_s}_{\alpha_{k+1} \alpha_{k+2} \ldots \alpha_{k+n} \ldots}$, but 
\[\Delta^{r_s}_{(r)}-1=
\Delta^{r_s}_{(r)}-\Delta^{r_s}_{(s-1)}=\Delta^{r_s}_{(r-s+1)}<
\Delta^{r_s}_{\alpha_{k+1} \alpha_{k+2} \ldots \alpha_{k+n} \ldots}.\]

Hence
\[\Delta^{r_s}_{\alpha_k(r-s+1)}=
\frac{\alpha_k}{s} + 
\frac{1}{s}\Delta^{r_s}_{(r-s+1)} <
\frac{\alpha_k}{s} + 
\frac{1}{s}\Delta^{r_s}_{\alpha_{k+1} \ldots \alpha_{k+n} \ldots}=
\Delta^{r_s}_{\alpha_{k} \alpha_{k+1} \ldots \alpha_{k+n} \ldots}.\]

However, 
\[\Delta^{r_s}_{\alpha_k(r-s+1)}=
\Delta^{r_s}_{[\alpha_{k}-1](r)},\]
and therefore 
\[\Delta^{r_s}_{[\alpha_{k}-1](r)}<\Delta^{r_s}_{\alpha_{k} \alpha_{k+1}\ldots \alpha_{k+n} \ldots}.\]

Consequently,
\[ \Delta^{r_s}_{\beta_k \beta_{k+1} \beta_{k+2} \ldots} \leq 
\Delta^{r_s}_{\beta_k(r)} \leq 
\Delta^{r_s}_{[\alpha_k -1](r)} < 
\Delta^{r_s}_{\alpha_{k}\alpha_{k+1} \alpha_{k+2} \ldots},\]
and hence
\[\Delta^{r_s}_{\beta_k \beta_{k+1} \beta_{k+2} \ldots} < 
\Delta^{r_s}_{\alpha_{k}\alpha_{k+1} \alpha_{k+2} \ldots}.\]

Thus, in the case $\alpha_k > \beta_k$, we obtain a contradiction of the assumption that there exists another representation of the number $x$.

Now suppose that $\alpha_k < \beta_k$.
By condition~1) we have
\[\Delta^{r_s}_{\alpha_k \alpha_{k+1} \alpha_{k+2} \ldots} <
\Delta^{r_s}_{[\alpha_k +1](0)}\]
and $\alpha_k \neq r$. 

Moreover, 
\[\Delta^{r_s}_{[\alpha_k +1](0)} \leq 
\Delta^{r_s}_{\beta_k \beta_{k+1} \beta_{k+2} \ldots},\]
and hence 
\[\Delta^{r_s}_{\alpha_k \alpha_{k+1} \alpha_{k+2} \ldots} < 
\Delta^{r_s}_{\beta_k \beta_{k+1} \beta_{k+2} \ldots}.\]

Thus, in this case as well, we obtain a contradiction of the assumption that there exists another representation of the number $x$. The sufficiency and hence the entire theorem are proved.
\end{proof} 

\begin{remark}
If the $r_s$-representation of a number $x$ has period $(s-1)$ or $(r-s+1)$, then $x$ has infinitely many distinct $r_s$-representations.
\end{remark}
Indeed, taking into account Lemma~\ref{periodic_infinite}, we have
\[\Delta^{r_s}_{\alpha_1\ldots\alpha_n(s-1)}=
\Delta^{r_s}_{\alpha_1\ldots\alpha_ns(0)}=
\Delta^{r_s}_{\alpha_1\ldots\alpha_n[s-1]\ldots[s-1]s(0)},\]
\[\Delta^{r_s}_{\alpha_1\ldots\alpha_{n(r-s+1)}}=
\Delta^{r_s}_{\alpha_1\ldots\alpha_n[r-s](r)}=
\Delta^{r_s}_{\alpha_1\ldots\alpha_n[r-s+1]\ldots[r-s+1][r-s](r)}.\]

\begin{lemma}\label{form_zeros_at_start}
Let $x_0 \in \left(0,\frac{r}{s-1}\right)$ be a number with a unique $r_s$-representation
$\Delta^{r_s}_{\alpha_1 \alpha_2 \ldots \alpha_n \ldots}$.
If $\alpha_k = 0$ for some $k$, then $\alpha_i = 0$ for all $i < k$.
The representation of $x_0$ has the form
\[x_0=
\Delta^{r_s}_{0\ldots0\alpha_m\alpha_{m+1}\ldots\alpha_{m+n}\ldots},\] where  $0<\alpha_m<s,\;\; \alpha_{m+j}\in\{r-s+1, r-s+2, \ldots, s-1\}\equiv N_{r-s+1}^{s-1},$ 
and the sequence $(\alpha_{m+n})$ cannot have period $(s-1)$ or $(r-s+1)$.
\end{lemma}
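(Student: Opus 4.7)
The plan is to deduce all three conclusions of the lemma directly from Theorem~\ref{criterion_of_uniq}, Lemma~\ref{uniq_inversed}, and the Remark that follows Theorem~\ref{criterion_of_uniq}. I will first prove that a single zero digit propagates all the way to the left of the representation, then establish the form of the representation past the leading block of zeros, and finally rule out the two forbidden eventual periods.

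For the first assertion I would argue by contradiction: suppose $\alpha_k = 0$ while some earlier digit is nonzero, and let $i < k$ be the largest index with $\alpha_i \neq 0$, so that $\alpha_{i+1} = 0$. Condition~(2) of Theorem~\ref{criterion_of_uniq} applied at index $i$ then forces $\Delta^{r_s}_{[r-\alpha_{i+1}][r-\alpha_{i+2}]\ldots} < 1$ (since $\alpha_i \neq 0$). However, the leading digit of this tail is $r - \alpha_{i+1} = r$, so its value is at least $r/s \geq 1$ by the hypothesis $r \geq s$, giving the desired contradiction.

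Assume now that the representation contains at least one zero, and let $m \geq 2$ be the smallest index with $\alpha_m \neq 0$. Condition~(1) at $k = m-1$, noting that $\alpha_{m-1} = 0 \neq r$, forces $\Delta^{r_s}_{\alpha_m \alpha_{m+1}\ldots} < 1$, which gives $\alpha_m \leq s-1 < r$. I would then show $\alpha_{m+j} \notin \{0,r\}$ for every $j \geq 1$: the value $0$ is excluded by the first assertion applied directly (it would force $\alpha_m = 0$), while $\alpha_{m+j} = r$ is excluded by applying the first assertion to the inverted representation via Lemma~\ref{uniq_inversed} (that route forces $\alpha_m = r$, contradicting $\alpha_m \leq s-1$). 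With each $\alpha_{m+j-1}$ now lying in $\{1,\ldots,r-1\}$, conditions~(1) and~(2) at $k = m+j-1$ reduce to the strict inequalities $\Delta^{r_s}_{\alpha_{m+j}\ldots} < 1$ and $\Delta^{r_s}_{[r-\alpha_{m+j}]\ldots} < 1$, which translate respectively into $\alpha_{m+j} \leq s-1$ and $\alpha_{m+j} \geq r-s+1$, i.e.\ $\alpha_{m+j} \in N_{r-s+1}^{s-1}$.

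For the non-periodicity statement, if the tail $(\alpha_{m+n})$ were eventually constant equal to $s-1$ or $r-s+1$, the Remark following Theorem~\ref{criterion_of_uniq} would produce infinitely many distinct $r_s$-representations of $x_0$, contradicting the assumed uniqueness. The main technical point in the argument is the interlocking order of the second step: the bound $\alpha_m \leq s-1$ must be in hand before invoking Lemma~\ref{uniq_inversed} to exclude $\alpha_{m+j} = r$, because that symmetric argument only yields a contradiction through the strict inequality $\alpha_m < r$.
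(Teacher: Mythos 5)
Your proof is correct, but it runs through a different mechanism than the paper's. The paper argues entirely with mutually replaceable digit pairs: a pair $(a,0)$ with $a\neq 0$ admits the alternative $(a-1,s)$, a pair $(0,b)$ with $b\geq s$ admits $(1,b-s)$, and any $\alpha_{m+j}\notin N_{r-s+1}^{s-1}$ creates a replaceable pair with its neighbour --- each such replacement directly exhibits a second representation and contradicts uniqueness. You instead deduce everything from the necessity half of Theorem~\ref{criterion_of_uniq}: condition~(2) at the last nonzero index before a zero fails because the inverted tail starts with the digit $r$ and so is $\geq r/s\geq 1$; condition~(1) at index $m-1$ gives $\alpha_m\leq s-1$; and after excluding $0$ and $r$ from the tail (the latter via Lemma~\ref{uniq_inversed}, correctly ordered after the bound $\alpha_m\leq s-1$ is in hand), conditions~(1) and~(2) at $k=m+j-1$ pin $\alpha_{m+j}$ into $N_{r-s+1}^{s-1}$. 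Both routes are sound and of comparable length; the paper's is more self-contained and makes the competing representation explicit, while yours is more systematic and is in fact the alternative the authors themselves acknowledge in the proofs of Lemma~\ref{form_r_at_start} and the structure theorem (``the same conclusion can also be obtained by applying the criterion of uniqueness''). Your closing appeal to the Remark after Theorem~\ref{criterion_of_uniq} for the exclusion of the periods $(s-1)$ and $(r-s+1)$ matches what the paper leaves implicit.
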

\begin{proof}
Since $x_0 \neq 0$, its $r_s$-representation contains at least one nonzero digit.
The number $x_0$ has a unique $r_s$-representation; therefore, among all pairs of
adjacent digits in its representation there is no pair $(a,0)$ admitting an
alternative replacement $(a-1,s)$. Hence, if $\alpha_k = 0$ and
$\alpha_{k+1} \neq 0$, then $\alpha_{k+j} \neq 0$ for every $j \in \mathbb{N}$.
The digit $\alpha_{k+1}$ cannot exceed $s-1$, since otherwise the pair
$(0,\alpha_{k+1})$ would admit an alternative replacement
$(1,\alpha_{k+1}-s)$. Thus, $0 < \alpha_{k+1} \leq s-1$.

The uniqueness of the number $x_0$ representation guarantees the absence
of mutually replaceable pairs among consecutive digits of its $r_s$-representation, which is possible only if $r-s+1 \leq \alpha_{k+j} \leq s-1$.
Taking into account the above observations, the statement is proven.
\end{proof}

\begin{lemma}\label{form_r_at_start}
Let $x_0 \in \left(0,\frac{r}{s-1}\right)$ be a number with a unique $r_s$-representation
$\Delta^{r_s}_{\alpha_1 \alpha_2 \ldots \alpha_n \ldots}$.
If $\alpha_k = r$ for some $k$, then $\alpha_i = r$ for all $i < k$.
The representation of $x_0$ has the form
\[x_0=\Delta^{r_s}_{r\ldots r\alpha_m\alpha_{m+1}\ldots\alpha_{m+n}\ldots}, \mbox{ where } r-s<\alpha_m<r, \alpha_{m+i}\in N_{r-s+1}^{s-1},\] 
and the sequence $(\alpha_{m+n})$ cannot have period $(s-1)$ or $(r-s+1)$. 
\end{lemma}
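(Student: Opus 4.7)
The plan is to obtain this lemma as the mirror image of Lemma~\ref{form_zeros_at_start} via the digit inversion $\alpha_n \mapsto r - \alpha_n$. The key observation is that Lemma~\ref{uniq_inversed} lets us pass freely between $x_0$ and the reflected number $x_0' = \Delta^{r_s}_{[r-\alpha_1][r-\alpha_2]\ldots[r-\alpha_n]\ldots}$, and every structural condition in Lemma~\ref{form_zeros_at_start} is preserved by the symmetric substitution $d \mapsto r-d$.

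First, I would introduce $x_0'$ and note that by Lemma~\ref{uniq_inversed} it has a unique $r_s$-representation; since $x_0 \in \left(0, \frac{r}{s-1}\right)$, the reflected number $x_0' = \frac{r}{s-1} - x_0$ also lies in $\left(0, \frac{r}{s-1}\right)$. The hypothesis $\alpha_k = r$ is exactly the statement that the $k$-th digit of $x_0'$ equals $0$. Applying Lemma~\ref{form_zeros_at_start} to $x_0'$ at this index yields $r - \alpha_i = 0$ for all $i < k$, which translates back to $\alpha_i = r$ for all $i < k$, proving the first assertion.

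Next, Lemma~\ref{form_zeros_at_start} further gives a block of initial zeros followed by a digit $\beta_m$ with $0 < \beta_m < s$ and subsequent digits $\beta_{m+j} \in N_{r-s+1}^{s-1}$, where the tail sequence $(\beta_{m+n})$ has no period $(s-1)$ or $(r-s+1)$. Setting $\alpha_i = r - \beta_i$, the inequality $0 < \beta_m < s$ transforms to $r - s < \alpha_m < r$, as required. The set $N_{r-s+1}^{s-1} = \{r-s+1,\ldots,s-1\}$ is invariant under the map $d \mapsto r - d$ (since it sends its endpoints to each other), so $\alpha_{m+j} \in N_{r-s+1}^{s-1}$ as well.

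Finally, the forbidden periods transform into one another: a period $(s-1)$ for $(\alpha_{m+n})$ would correspond to a period $(r-s+1)$ for $(\beta_{m+n})$, and vice versa, both excluded by Lemma~\ref{form_zeros_at_start}. Since every step is a direct translation through the involution $d \mapsto r - d$, there is no real obstacle; the only thing to double-check is that the set $N_{r-s+1}^{s-1}$, the bounds $r-s < \alpha_m < r$, and the two exceptional periods are genuinely fixed (as sets) by this involution, which they are by the symmetry of the alphabet around $r/2$.
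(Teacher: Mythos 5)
Your proof is correct, and it is precisely the alternative argument the paper itself sketches in the closing lines of its proof of this lemma (``The same conclusion can be obtained on the basis of Lemma~\ref{uniq_inversed}\dots''). The paper's primary argument, however, is direct: it locates the last index $j<k$ with $\alpha_j<r$ and exhibits the mutually replaceable pair $(\alpha_j,r)\mapsto(\alpha_j+1,r-s)$, then rules out $\alpha_m\le r-s$ via the replacement $(r,\alpha_m)\mapsto(r-1,\alpha_m+s)$, and similarly constrains the tail digits --- all without invoking the reflection. Your route is shorter and cleanly reuses Lemma~\ref{form_zeros_at_start}, at the cost of leaning on the involution $d\mapsto r-d$; you correctly verify the two points on which that reduction hinges, namely that $x_0'=\frac{r}{s-1}-x_0$ again lies in $\bigl(0,\frac{r}{s-1}\bigr)$ with $\Delta^{r_s}_{[r-\alpha_1][r-\alpha_2]\ldots}$ as its \emph{unique} representation (so that Lemma~\ref{form_zeros_at_start} genuinely applies to it), and that the bounds, the set $N_{r-s+1}^{s-1}$, and the pair of forbidden periods are carried into one another by the involution. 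The direct argument has the independent merit of exhibiting the explicit alternative representations that would arise if any condition failed, which is the mechanism used throughout the rest of the paper.
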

\begin{proof}
Since $x_0 \neq \dfrac{r}{s-1}$, its $r_s$-representation contains digits different from $r$.
Assume that $\alpha_k = r$.
Suppose that there exists an index $i < k$ such that $\alpha_i < r$, and let
\[j=\max\{i:~\alpha_i<r,\; i<k\}.\]

Then the pair $(\alpha_j, \alpha_{j+1}) = (\alpha_j, r)$ admits an alternative replacement,
namely the pair $(\alpha_j + 1, r - s)$, which contradicts the uniqueness of the
$r_s$-representation of the number $x_0$.
Therefore, the $r_s$-representation of the number $x_0$ has the form
\[x_0 = \Delta^{r_s}_{r \ldots r \alpha_m \alpha_{m+1} \ldots}.\]

If we assume that $\alpha_m\leq r-s$, then the pair $(r,\alpha_m)$ admits an alternative replacement, namely the pair $(r-1,\alpha_m+s)$, which contradicts the uniqueness of the $r_s$-representation of the number $x_0$. Hence, $r-s<\alpha_m<r$.

Suppose that $\alpha_{m+j}<r-s+1$ або $\alpha_{m+j}>s-1$. Then, for one of the digit pairs $(\alpha_{m+j},\alpha_{m+j+1})$, an alternative replacement can be specified, which again leads to a contradiction to the uniqueness of the $r_s$-representation of $x_0$.

The same conclusion can be obtained on the basis of Lemma~\ref{uniq_inversed}, since the number
$x = \Delta^{r_s}_{r\ldots r \alpha_m \alpha_{m+1} \ldots}$
has a unique representation if and only if the number

    \[x_0'=\frac{r}{s-1}-x_0=
    \Delta^{r_s}_{0 \ldots 0[r-\alpha_m][r-\alpha_{m+1}][r-\alpha_{m+2}]\ldots}\]
has a unique $r_s$-representation.
According to Lemma~\ref{form_zeros_at_start}, the representation of $x_0'$ must satisfy the conditions
\[0 < r-\alpha_m < s \quad 
\text{and} 
\quad r-s+1 \leq r-\alpha_{m+1} \leq s-1,\]
which are equivalent to the conditions of the present statement.  
Taking into account the above observation, the lemma is proven.
\end{proof}

\begin{theorem}
The set $E$ of numbers $x \in \left(0,\frac{r}{s-1}\right)$ that have a unique $r_s$-representation consists precisely of those numbers whose
$r_s$-representations do not contain a period equal to $(s-1)$ or $(r-s+1)$ and have one of the following forms:
\[\Delta^{r_s}_{c \beta_1 \beta_2 \ldots \beta_n \ldots},\; \Delta^{r_s}_{00\ldots 0 a \beta_1 \beta_2 \ldots \beta_n \ldots},\; \Delta^{r_s}_{rr\ldots r b \beta_1 \beta_2 \ldots \beta_n \ldots},\]
where $ 0\leq a<s,\; r-s<b\leq r, \;\beta_i \in  N_{r-s+1}^{s-1}.$
\end{theorem}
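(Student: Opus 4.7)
My plan is to prove both inclusions by combining the two structural Lemmas~\ref{form_zeros_at_start} and~\ref{form_r_at_start} with the criterion of uniqueness in Theorem~\ref{criterion_of_uniq} and the Remark on forbidden periods.

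For necessity, given $x\in E$ with representation $\Delta^{r_s}_{\alpha_1\alpha_2\ldots}$, I would split into three disjoint cases according to whether the digits $0$ and $r$ appear in the representation. If some $\alpha_k=0$, Lemma~\ref{form_zeros_at_start} delivers exactly the second form and, through the constraint $\alpha_{m+j}\in N_{r-s+1}^{s-1}$, forbids the digit $r$ from appearing. Symmetrically, if some $\alpha_k=r$, Lemma~\ref{form_r_at_start} yields the third form. In the remaining case all digits lie in $\{1,\ldots,r-1\}$, and I would use an adjacent-pair analysis to show that $\alpha_k\in N_{r-s+1}^{s-1}$ for every $k\ge 2$: if $\alpha_k\le r-s$, then since $\alpha_{k-1}\ge 1$ the pair $(\alpha_{k-1},\alpha_k)$ admits the alternative replacement $(\alpha_{k-1}-1,\alpha_k+s)$, violating uniqueness, and the dual argument rules out $\alpha_k\ge s$. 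The first digit $c=\alpha_1$ is then unconstrained beyond $c\in\{1,\ldots,r-1\}$, giving form~1. In every case the Remark rules out periods $(s-1)$ and $(r-s+1)$, since either would furnish infinitely many representations.

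For sufficiency, I would take a representation of one of the three stated forms, with neither period $(s-1)$ nor $(r-s+1)$ in its tails, and verify conditions~(1) and~(2) of Theorem~\ref{criterion_of_uniq} at every index $k$. For an index $k$ inside the initial zero block of form~2, condition~(2) holds because $\alpha_k=0$, and condition~(1) reduces to showing $\Delta^{r_s}_{\alpha_{k+1}\alpha_{k+2}\ldots}<\Delta^{r_s}_{(s-1)}=1$; this follows from the fact that all subsequent digits are at most $s-1$, together with the exclusion of period $(s-1)$ in the tail. A dual argument handles the leading $r$-block of form~3 via condition~(2) and the exclusion of period $(r-s+1)$. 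For indices $k\ge m$ in any of the three forms, both tails lie in $N_{r-s+1}^{s-1}$ and the same bound, applied to the tail and to its digit-wise complement, yields both inequalities strictly.

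The main obstacle I anticipate is the third case of necessity (where neither $0$ nor $r$ appears), since neither structural lemma applies directly. I would handle it with the pairwise alternative-replacement argument outlined above, treating $\alpha_1$ separately (there is no preceding digit with which to form a pair) from the induction on $k\ge 2$. Once this is settled, collecting the three forms and invoking the period exclusions completes the necessity direction, and sufficiency reduces to a routine application of the uniqueness criterion.
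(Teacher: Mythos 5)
Your necessity argument coincides with the paper's proof: the same three-way case split on whether $0$ or $r$ occurs, the same appeal to Lemmas~\ref{form_zeros_at_start} and~\ref{form_r_at_start} in the first two cases, and the same adjacent-pair alternative-replacement argument $(\alpha_{k-1},\alpha_k)\mapsto(\alpha_{k-1}-1,\alpha_k+s)$ or $(\alpha_{k-1}+1,\alpha_k-s)$ for $k\ge 2$ in the remaining case. Where you genuinely diverge is that you also prove the converse inclusion: the paper's proof stops after showing that every uniquely representable number has one of the three forms, and never verifies that a representation of one of these forms (with the periods $(s-1)$ and $(r-s+1)$ excluded) is in fact unique, even though the theorem asserts the set ``consists precisely'' of such numbers. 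Your sufficiency sketch via Theorem~\ref{criterion_of_uniq} is sound: for indices in the leading $0$-block condition~(2) is automatic and condition~(1) follows because all subsequent digits are at most $s-1$ with the all-$(s-1)$ tail excluded; the dual holds for the $r$-block; and for indices beyond $\alpha_m$ both tails have digits in $N_{r-s+1}^{s-1}$, so both bounds are strict once the two forbidden periods are removed. This makes your write-up more complete than the paper's; the only cost is that it leans on the uniqueness criterion for a direction the paper treats as implicit.
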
  
\begin{proof} 

If the $r_s$-representation of a number with a unique representation contains the digit $0$ or $r$, then, as established by the two preceding lemmas, the representation of the number has the form
\[\Delta^{r_s}_{00\ldots 0 a \beta_1\beta_2 \ldots \beta_n \ldots}
\quad \text{or} \quad
\Delta^{r_s}_{rr\ldots r b \beta_1\beta_2 \ldots \beta_n \ldots},\]
where $ 0\leq a<s$, $r-s<b\leq r$, $\beta_i \in  N_{r-s+1}^{s-1}$.

Consider the numbers that have a unique $r_s$-representation and do not use the digits $0$ and $r$ in their $r_s$-representations at all.
Let $x_0=\Delta^{r_s}_{\alpha_1\alpha_2\ldots\alpha_n\ldots}$ be a number of this type.
Suppose that for some $k>1$ the digit $\alpha_k\in\{1,\ldots,r-s\}\cup\{s,s+1,\ldots,r-1\}$, then the pair $(\alpha_{k-1},\alpha_k)$ admits an alternative replacement.
In the case $1\leq \alpha_k\leq r-s$, this replacement is the pair
$(\alpha_{k-1}-1,\alpha_{k}+s),$
and in the case $s\leq \alpha_{k}\leq r-1$ the alternative replacement is the pair $(\alpha_{k-1}+1,\alpha_k-s)$.
In both cases, this contradicts the uniqueness of the $r_s$-representation of the number $x_0$.
Hence,
$$\alpha_k \in N_{r-s+1}^{s-1}
\quad \text{for all } k\in\{2,3,\ldots\}.$$
The same conclusion can also be obtained by applying the criterion of uniqueness (Theorem~\ref{criterion_of_uniq}).
\end{proof}

\begin{theorem}
The set $E$ of numbers that have a unique $r_s$-representation is symmetric about the point $\frac{r}{2(s-1)}$, is not closed, and has Hausdorff--Besicovitch fractal dimension $\frac{\ln(2s-r-1)}{\ln s}$.
\end{theorem}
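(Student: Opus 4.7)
The plan is to verify the three assertions---symmetry, non-closedness, and the dimension formula---separately.

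Symmetry is immediate from Lemma~\ref{uniq_inversed}: the involution $\varphi(x) = \frac{r}{s-1} - x$ acts on representations by the digit flip $\alpha_k \mapsto r-\alpha_k$ and therefore sends $E$ to itself, with unique fixed point $\frac{r}{2(s-1)}$.

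For non-closedness I would exhibit a sequence $x_n \in E$ with limit outside $E$. The natural limit candidate is $x^* = \Delta^{r_s}_{(s-1)} = 1$, which by Lemma~\ref{periodic_infinite} has infinitely many representations, so $x^* \notin E$. Assume $r < 2s-2$, so that $N_{r-s+1}^{s-1}$ has at least two elements; fix any non-eventually-constant sequence $(\beta_k) \in (N_{r-s+1}^{s-1})^{\mathbb{N}}$ and let $x_n$ be the number whose $r_s$-representation consists of $n$ copies of $s-1$ followed by $\beta_1,\beta_2,\ldots$. By the preceding structure theorem $x_n \in E$, and $x_n \to 1 = x^*$. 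The edge case $r = 2s-2$ forces the forbidden periodic tail $(s-1)$, making $E$ empty in $\left(0,\tfrac{r}{s-1}\right)$ and the dimension claim trivial.

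For the dimension, I would exhibit a self-similar Cantor-type subset of $E$ realising the claimed dimension and then bound $E$ from above by a countable union of affine copies of this subset. Let
\[F = \{\, x = \Delta^{r_s}_{\alpha_1\alpha_2\ldots} : \alpha_k \in N_{r-s+1}^{s-1} \text{ for every } k\,\}.\]
Up to a countable subtraction (tails eventually equal to $s-1$ or to $r-s+1$), $F$ coincides with the attractor $K$ of the IFS $\{\, f_d(x) = (x+d)/s : d \in N_{r-s+1}^{s-1}\,\}$ on the invariant interval $\left[\tfrac{r-s+1}{s-1},\,1\right]$. The geometric check---which is the main obstacle---is that the $2s-r-1$ images of this interval are pairwise disjoint, separated by gaps of length $\tfrac{r-s+1}{s(s-1)} > 0$; here the hypothesis $r \geq s$ enters essentially. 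The strong open set condition therefore holds, so Moran's theorem yields $\dim_H K = \frac{\ln(2s-r-1)}{\ln s}$. Countable subtraction leaves Hausdorff dimension unchanged, giving $\dim_H E \geq \dim_H F = \dim_H K$. For the matching upper bound, the structure theorem writes $E$ as a countable union of sets of the three prescribed forms, each an affine image (with contraction ratio $s^{-k}$, $k \geq 1$) of a subset of $K$ and therefore of Hausdorff dimension at most $\dim_H K$; countable stability of Hausdorff dimension then gives $\dim_H E \leq \frac{\ln(2s-r-1)}{\ln s}$, completing the proof.
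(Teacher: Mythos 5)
Your proof is correct and follows essentially the same route as the paper's: symmetry via the digit reflection $\alpha_k \mapsto r-\alpha_k$, non-closedness via a sequence in $E$ accumulating at a point whose representation has a forbidden periodic tail, and the dimension via decomposing $E$ into countably many affine copies of the Cantor-type attractor with digits in $N_{r-s+1}^{s-1}$ and invoking countable stability. Your write-up is in fact slightly more careful than the paper's, since you verify the separation of the IFS pieces explicitly and flag the degenerate case $r=2s-2$, in which $E\cap\left(0,\frac{r}{s-1}\right)$ is empty and the non-closedness assertion needs the restriction $r<2s-2$ -- a point the paper does not address.
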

\begin{proof}
It is easy to verify that if a number 
$x=\Delta^{r_s}_{\alpha_1\ldots\alpha_n\ldots}$ 
belongs to the set $E$, then the number
$x'=\Delta^{r_s}_{[r-\alpha_1][r-\alpha_2]\ldots[r-\alpha_n]\ldots}$
also belongs to $E$, and conversely. Hence, the symmetry of the set $E$ is obvious.

Each point of the form
$\Delta^{r_s}_{\underbrace{0\ldots0}_k a\beta_1\beta_2\ldots\beta_n(c)}$,
where $c\in \{s-1,r-s+1\}$, $k \in \mathbb{N}$ is a limit point of the set $E$, but does not belong to it. Therefore, the set $E$ is not closed.

Let $D$ denote the set of all numbers whose $r_s$-representation has period $(s-1)$ or $(r-s+1)$. As noted above, $E \cap D = \varnothing$. It is well known that every countable set has Lebesgue measure zero and Hausdorff--Besicovitch dimension zero. Therefore, such sets can be neglected in metric problems.

Let us introduce the following notation. For $n \in \mathbb{N}\cup\{0\}$, define  
\[E_{0n}^a=\{x ~\colon~ 
x = \Delta^{r_s}_{\underbrace{ 0 \ldots 0}_n
a \beta_1 \beta_2 \ldots \beta_n \ldots},
\beta_n \in N_{r-s+1}^{s-1}\}, 0 < a < s;\]

\[E_{rn}^b=\{x ~\colon~ 
x = \Delta^{r_s}_{\underbrace{r \ldots r}_n
b \beta_1 \beta_2 \ldots \beta_n \ldots},
\beta_n \in N_{r-s+1}^{s-1}\}, r-s < b < r;\]

\[\overline{E}_{0n}^a \equiv E_{0n}^a \setminus D,\quad
  \overline{E}_{rn}^b \equiv E_{rn}^b \setminus D.\]

Then the set $E$ has the following structure:
\[E=\bigcup\limits_{n=0}^{\infty}
\left[
\bigcup\limits_{a=1}^{s-1} \overline{E}_{0n}^{a} 
\;\cup\;
\bigcup\limits_{b=r-s+1}^{r-1} \overline{E}_{rn}^{b}
\right].\]

All sets $E_{in}^{j}$ with fixed $n$ are isometric to each other and are self-similar. Each of them is similar to the set
\[C=\{x~\colon~ x=
\Delta^{r_s}_{\beta_1 \beta_2 \ldots \beta_n \ldots}, 
\beta_n \in N_{r-s+1}^{s-1}\},\]
which consists of numbers representable in the classical base-$s$ numeral system using digits from the set $N_{r-s+1}^{s-1}$.
This set is a self-similar Cantor-type set meeting the open set condition~\cite{6_Hutchinson}.
Therefore, its similarity dimension $\log_s(2s-r-1)$ coincides with its Hausdorff--Besicovitch dimension $\alpha_0(C)$.

The Hausdorff--Besicovitch dimensions of the sets $E$ and $C$ coincide due to the countable stability property of the dimension
 $(\alpha_0(\bigcup\limits_{i}F_i)=\sup\limits_{i}\alpha_0(F_i))$.
\end{proof}

\section{The set of numbers having a continuum of representations}
\begin{theorem}
A number $x_0$ for which some $r_s$-representation
$\Delta^{r_s}_{\alpha_1 \alpha_2 \ldots \alpha_n \ldots}$
has no period $(0)$ or $(r)$, but contains infinitely many digits from the set
\[
A_r \setminus N_{r-s+1}^{s-1}
= \{0,1,2,\ldots,r-s,\, s, s+1, \ldots, r-1, r\},
\]
admits a continuum of distinct $r_s$-representations.
\end{theorem}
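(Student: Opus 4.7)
The plan is to locate infinitely many pairwise disjoint pairs of adjacent positions in the given representation at each of which a mutually replaceable (alternative) digit pair is available; making independent binary choices among these replacements then yields a family of $2^{\aleph_0}$ distinct $r_s$-representations of $x_0$.

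First, I partition the infinite index set $I = \{k \geq 2 : \alpha_k \in A_r \setminus N_{r-s+1}^{s-1}\}$ into
\[
I_{\mathrm{small}} = \{k \in I : \alpha_k \leq r-s\}, \qquad
I_{\mathrm{large}} = \{k \in I : \alpha_k \geq s\},
\]
at least one of which is infinite. By the reflection $\alpha_n \mapsto r - \alpha_n$ appearing in Lemma~\ref{uniq_inversed} (which plainly preserves the total number of $r_s$-representations of a point), the two cases are symmetric, so I may assume $I_{\mathrm{small}}$ is infinite. For each $k \in I_{\mathrm{small}}$ with $\alpha_{k-1} \geq 1$, the pair $(\alpha_{k-1}, \alpha_k)$ admits the alternative $(\alpha_{k-1}-1,\, \alpha_k+s)$, which is legal since $\alpha_k + s \leq r$. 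The crux is therefore to show that the set of such indices $k$ is infinite.

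To prove this by contradiction, suppose there exists $N$ with $\alpha_{k-1}=0$ for every $k \in I_{\mathrm{small}}$ with $k > N$. Since $0 \leq r-s$, the index $k-1$ itself lies in $I_{\mathrm{small}}$, and (provided $k-1 > N$) the same rule applied to $k-1$ forces $\alpha_{k-2}=0$. A finite downward induction starting from any $k^* \in I_{\mathrm{small}}$ with $k^* > N$ gives $\alpha_j = 0$ for all $N-1 \leq j \leq k^*-1$. Letting $k^*$ range over the infinite tail of $I_{\mathrm{small}}$ forces $\alpha_j = 0$ for all sufficiently large $j$, contradicting the hypothesis that the representation has no period $(0)$. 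In the parallel case when $I_{\mathrm{large}}$ is infinite, the symmetric argument uses the replacement $(\alpha_{k-1}+1,\, \alpha_k - s)$ together with the ``no period $(r)$'' hypothesis.

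Once infinitely many admissible positions $k$ are in hand, I extract a subsequence $k_1 < k_2 < \cdots$ with $k_{i+1} \geq k_i + 2$, so that the replacement windows $\{k_i - 1, k_i\}$ are pairwise disjoint. For each subset $S \subseteq \mathbb{N}$, performing the alternative replacement at $(k_i - 1, k_i)$ precisely when $i \in S$ yields a valid $r_s$-representation of $x_0$, and different choices of $S$ produce representations differing in at least one digit; this gives $2^{\aleph_0}$ distinct representations. The main obstacle is the chain argument of the previous paragraph, which is the only place where the ``no period $(0)$'' or ``no period $(r)$'' hypothesis actually enters; the partition, the symmetry, and the extraction of disjoint windows are routine.
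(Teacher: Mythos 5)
Your proof is correct, and the underlying mechanism is the same as the paper's: locate infinitely many pairwise non-interfering positions where a mutually replaceable digit pair is available, then make independent binary choices to produce $2^{\aleph_0}$ distinct representations. The organization differs, though. The paper splits into three cases according to whether the representation contains infinitely many $0$'s, infinitely many $r$'s, or finitely many of both; in the first two cases it reads off the pairs $(a,0)$ with $a\neq 0$ (resp.\ $(a,r)$ with $a\neq r$) directly from the ``no period $(0)$'' (resp.\ $(r)$) hypothesis, and in the third case it uses a digit $c\in\{1,\dots,r-s\}\cup\{s,\dots,r-1\}$ occurring infinitely often, invoking Lemma~\ref{periodic_continuum} when the tail is periodic. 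You instead split only on whether the ``small'' digits $\{0,\dots,r-s\}$ or the ``large'' digits $\{s,\dots,r\}$ occur infinitely often, collapse the two cases via the reflection $\alpha_n\mapsto r-\alpha_n$, and replace the paper's direct reading with a downward propagation argument: if every late small digit were preceded by $0$, the zeros would percolate backwards and force an eventual period $(0)$. This buys a cleaner two-case structure and makes explicit two points the paper leaves implicit --- that the replacement windows must be chosen pairwise disjoint for the binary choices to be independent, and exactly where the ``no period $(0)$ or $(r)$'' hypothesis is consumed. The only blemish is a harmless off-by-one in the range ``$N-1\leq j\leq k^*-1$'' of your induction (it should start at $j=N$), which does not affect the conclusion that $\alpha_j=0$ for all sufficiently large $j$.
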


\begin{proof}
The conditions of the lemma allow for the following cases:

\begin{enumerate}
\item The number contains infinitely many zeros;
\item The number contains infinitely many digits equal to $r$;
\item The number contains only finitely many digits equal to $0$ or $r$.
\end{enumerate}
We consider these cases separately.

\medskip
\noindent
\textbf{Case 1.}
Since the given $r_s$-representation of the number $x_0$ does not have the period $(0)$, the sequence $(\alpha_k)$ contains infinitely many pairs $(a,0)$ with $a \neq 0$ occurring as consecutive digits in the $r_s$-representation of $x_0$. However, each such pair admits an alternative replacement $(a-1,s)$. The existence of infinitely many such alternative replacements implies that the set of distinct $r_s$-representations of the number $x_0$ has the cardinality of the continuum.

\medskip
\noindent
\textbf{Case 2.}
An analogous situation occurs in the second case. Since the $r_s$-representation of the number $x_0$ does not have the period $(r)$, there exist infinitely many pairs $(a,r)$ with $a \neq r$ occurring as consecutive digits in the $r_s$-representation of $x_0$. Since this pair admits an alternative replacement, namely the pair $(a+1, r-s)$, we again obtain a continuum of distinct $r_s$-representations of the number $x_0$.

\medskip
\noindent
\textbf{Case 3.}
Suppose that the given $r_s$-representation of the number $x_0$ contains only finitely many digits equal to $0$ and $r$. 
Then there exists a digit $c$ belonging to one of the sets
$\{1,2,\ldots,r-s\}$ or $\{s,s+1,\ldots,r-1,r\}$ that occurs infinitely many times in the $r_s$-representation.

If the $r_s$-representation of $x_0$ is periodic with period $(c)$, where $c\neq 0$ and $c\neq r$, then, by Lemma~\ref{periodic_continuum}, the number $x_0$ has a continuum of distinct $r_s$-representations.

On the other hand, if the $r_s$-representation of $x_0$ is not periodic, then the sequence $(\alpha_k)$ contains infinitely many pairs $(b,c)$ of consecutive digits such that $c\neq b\neq 0, b\neq r$. If $c$ belongs to the set $\{1,2,\ldots,r-s\}$, then the pair $(b,c)$ admits the alternative replacement $(b-1,c+s)$. 
If $c$ belongs to $\{s,s+1,\ldots,r-1\}$, then the alternative replacement of $(b,c)$ is $(b+1,c-s)$. 
In both cases, the existence of infinitely many alternative replacements implies that the set of distinct $r_s$-representations of the number $x_0$ is a continuum.
\end{proof}

\begin{theorem} 
Almost all numbers with respect to Lebesgue measure on the interval
$\left(0,\frac{r}{s-1}\right)$
have a continuum of distinct $r_s$-representations; that is, the set of such
numbers has Lebesgue measure equal to $\frac{r}{s-1}$.
\end{theorem}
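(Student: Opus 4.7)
The plan is to prove this by bounding the complement: let $F \subset \left(0,\frac{r}{s-1}\right)$ denote the set of numbers that do \emph{not} admit a continuum of distinct $r_s$-representations, and show $\lambda(F)=0$ (where $\lambda$ is Lebesgue measure). By the contrapositive of the preceding theorem, every $x\in F$ has the property that \emph{each} of its $r_s$-representations either has period $(0)$, has period $(r)$, or contains only finitely many digits outside $N_{r-s+1}^{s-1}$. Fixing any one representation of each $x\in F$, we obtain the inclusion $F \subseteq F_1 \cup F_2 \cup F_3$, where $F_1$ is the set of numbers possessing a representation with period $(0)$, $F_2$ the same for period $(r)$, and $F_3$ the set of numbers possessing a representation of the form $\Delta^{r_s}_{a_1 a_2\ldots a_n \beta_1\beta_2\ldots}$ with $\beta_i \in N_{r-s+1}^{s-1}$ for all $i$.

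The sets $F_1$ and $F_2$ are obviously countable (numbers of the form $a/s^n$ and $a/s^n + r/(s^n(s-1))$ respectively), hence of Lebesgue measure zero. The main point is therefore $F_3$. I would write
\[
F_3 = \bigcup_{n=0}^{\infty} \bigcup_{(a_1,\ldots,a_n)\in A_r^n} \left(\sum_{k=1}^{n}\frac{a_k}{s^k} + \frac{1}{s^n}\,C\right),
\]
where $C$ is the self-similar Cantor-type set of numbers with all $s$-ary digits in $N_{r-s+1}^{s-1}$, as introduced in the proof of the previous dimension theorem. Since each summand is an affine (scaled) copy of $C$, and the union is over a countable index set, it suffices to show $\lambda(C)=0$.

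The measure-zero claim for $C$ follows immediately from the dimension computation done earlier in the paper: the Hausdorff--Besicovitch dimension of $C$ equals $\log_s(2s-r-1)$, and the hypothesis $s \leq r \leq 2s-2$ gives $1 \leq 2s-r-1 \leq s-1 < s$, so $\log_s(2s-r-1) < 1$; any set of Hausdorff dimension strictly less than $1$ has Lebesgue measure zero. Combining the three pieces yields $\lambda(F)=0$, so the set of numbers admitting a continuum of $r_s$-representations has full measure $\frac{r}{s-1}$ on the interval.

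The only delicate step is justifying the inclusion $F\subseteq F_1\cup F_2\cup F_3$: one must observe that the preceding theorem's hypotheses fail for \emph{some} representation of $x\in F$ simply by negating its statement for a single arbitrarily chosen representation, which is enough because $F_3$ is defined in terms of the \emph{existence} of a representation eventually lying in $N_{r-s+1}^{s-1}$ rather than all representations having that property. Everything else is a routine countable-union/measure-zero bookkeeping argument relying on the dimension result already established.
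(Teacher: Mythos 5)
Your proposal is correct and follows essentially the same route as the paper: both bound the complement by a countable union of affine copies of the Cantor-type set $C$ of numbers with all digits in $N_{r-s+1}^{s-1}$ (the paper's sets $B_n$ intersected with cylinders are exactly your scaled copies $\sum_{k\le n} a_k s^{-k}+s^{-n}C$). The only differences are cosmetic: you justify $\lambda(C)=0$ explicitly via the dimension bound $\log_s(2s-r-1)<1$, where the paper merely asserts that each cylinder piece is nowhere dense and null, and you treat the period-$(0)$ and period-$(r)$ representations separately as countable sets, a point the paper leaves implicit.
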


\begin{proof}
Let $B$ denote the set of all numbers for which at least one $r_s$-representation does not satisfy the conditions of the preceding theorem; that is, among the digits of such an $r_s$-representation, only finitely many belong to the set $A_r\setminus N_{r-s+1}^{s-1}$, while all the remaining digits belong to the set $\{r-s+1,r-s+2,\ldots,s-1\}$.

Denote by $B_n$ the set of all numbers whose $r_s$-representations contain no digits from the set
$A_r \setminus N_{r-s+1}^{s-1}$ starting from the $n$-th position, where $n \in \mathbb{N}$.

Each of the sets $B_n$ has Lebesgue measure zero, since for any fixed $n\in\mathbb{N}$ and any 
\mbox{$(\alpha_1,\alpha_2,\ldots,\alpha_n)\in A_r^n$} the set

\[\Delta^{r_s}_{\alpha_1 \ldots \alpha_n}\cap B_n,\quad 
\Delta^{r_s}_{\alpha_1 \ldots \alpha_n} =
\left[\sum\limits_{i=1}^{n} \frac{\alpha_i}{s^i};
\frac{r}{s^n(s-1)}+\sum\limits_{i=1}^{n}\frac{\alpha_i}{s^i}\right]\]
is nowhere dense and has Lebesgue measure zero.

Оbviously, 
\[B_1\subset B_2\subset\ldots\subset B_n\subset B_{n+1}\subset\ldots\]
and
\[B=\bigcup\limits_{n=1}^{\infty}B_n=\lim\limits_{n\to\infty}B_n.\]
Hence, for the Lebesgue measure we have $\lambda(B)=\lambda(\bigcup\limits_{n=1}^{\infty}B_n)\leq
\sum\limits_{n=1}^{\infty}\lambda(B_n)=0$. Therefore, the set of numbers whose $r_s$-representations satisfy the conditions
of the theorem has full Lebesgue measure.
\end{proof}
\begin{corollary}
The set of numbers with finitely or countably many distinct
$r_s$-representations is of Lebesgue measure zero.
\end{corollary}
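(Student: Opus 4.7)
The plan is to deduce the corollary as an immediate complement of the preceding theorem. I would denote by $F$ the set of numbers in $\left(0,\frac{r}{s-1}\right)$ that admit only finitely or countably many distinct $r_s$-representations, and by $G$ the set of numbers in the same interval admitting a continuum of distinct $r_s$-representations. Since continuum cardinality and at-most-countable cardinality are mutually exclusive, $F\cap G = \varnothing$, and hence $F \subseteq \left(0,\tfrac{r}{s-1}\right)\setminus G$.

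Next I would invoke the preceding theorem, which gives $\lambda(G)=\tfrac{r}{s-1}$, the full Lebesgue measure of the open interval. Monotonicity of $\lambda$ then yields
\[
\lambda(F)\le\lambda\Bigl(\left(0,\tfrac{r}{s-1}\right)\setminus G\Bigr)=\tfrac{r}{s-1}-\tfrac{r}{s-1}=0,
\]
whence $\lambda(F)=0$, as claimed. The two endpoints of the interval form a null set, so the conclusion carries over to the closed interval $\left[0,\tfrac{r}{s-1}\right]$ without change.

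The main obstacle is essentially nonexistent: this is a straightforward complementation argument resting entirely on the full-measure statement of the preceding theorem, with no additional construction or estimate required. The only point worth flagging is that one does \emph{not} need to establish that ``finite / countably infinite / continuum'' exhausts the possible cardinalities of the set of representations of a single number; mere disjointness of $F$ and $G$ is already sufficient to run the measure-theoretic estimate above.
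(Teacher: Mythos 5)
Your argument is correct and coincides with the paper's (implicit) reasoning: the corollary is stated without proof precisely because it is the complementation of the preceding full-measure theorem, which is exactly what you carry out. No gaps.
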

\begin{remark}
The property for a number $x_0 \in \left[0,\frac{r}{s-1}\right]$ to have a continuum of distinct representations is typical.
\end{remark}

% include bibliography:
\bibliographystyle{plainurl}
\bibliography{mybiblio-ed2}

@article{0_vmps_kantorval,
	title        = {Positive series, whose sets of subsums are Cantorvals},
	author       = {Vynnyshyn, Ya. and Markitan, V. and Pratsiovytyi, M. and Savchenko, I.},
	year         = 2019,
	journal      = {Proceedings of the International Geometry Center},
    volume       = 12,
	number       = 2,
	pages        = { 26--42}
}

@article{1_dist_rv,
	title        = {Singular distributions of random variables with independent digits of representation in numeral system with natural base and redundant alphabet},
	author       = {Pratsiovytyi, M.V. and Ratushniak, S.P.},
	year         = 2025,
	journal      = {Matematychni Studii},
    volume       = 63,
	number       = 2,
	pages        = { 199--209}
}

@article{3_redundant_alph_Honcharenko,
	title        = {Representations of real numbers in numeral systems with redundant set of digits and their applications},
	author       = {Goncharenko, Ya.V. and Mykytyuk, I.O.},
	year         = 2004,
	journal      = {Nauk. Chasop. Nats. Pedagog. Univ. Mykhaila Dragomanova. Ser 1. Fiz.-Mat. Nauky},
    volume       = 5,
	pages        = { 242--254}
}

@article{6_Hutchinson,
	title        = {Fractals and Self Similarity},
	author       = {Hutchinson, J.E.},
	year         = 1981,
	journal      = {Indiana Univ. Math. J.},
    volume       = 30,
	number       = 5,
	pages        = { 713--747}
}

@article{9_webtype_curve,
	title        = {Distribution of unit mass on 
one fractal self-similar web-type curve},
	author       = {Pratsiovytyi, M.V. and Lysenko, 
I.M. and Ratushniak, S.P. and Tsokolenko, O.A.},
	year         = 2024,
	journal      = {Matematychni Studii},
    volume       = 62,
	number       = 1,
	pages        = {21--30}
}

@book{10_encod_systems,
	title        = {Two-symbol encoding of real numbers and its applications},
	author       = {Pratsiovytyi, M.V.},
	year         = 2022,
	publisher    = {Nauk. Dumka, Kyiv},
	pagetotal        = { 316}
}

@book{4_Pratsiovytyi,
	title        = {A fractal approach to investigation of singular 
probability distributions},
	author       = {Pratsiovytyi, M.V.},
	year         = 1998,
	publisher    = {Nats.Pedagog. Univ. Mykhaila Dragomanova},
	pagetotal        = { 296}
}

@article{11_convolutions,
	title        = {Convolutions of singular distributions},
	author       = {Pratsiovytyi, M.V.},
	year         = 1996,
	journal      = {Reports of the National Academy of Sciences of Ukraine},
    volume       = 5,
	number       = 5,
	pages        = { 36--42}
}

@article{12_Karvatskyi2024,
	title        = {The achievement set of generalized multigeometric sequences},
	author       = {Karvatskyi, D. and Murillo, A. and Viruel, A.},
	year         = 2024,
	journal      = {Results Math.},
    volume       = 79,
	number       = 132,
	pages        = { 32--37}
}

@article{14_Komornik,
	title        = {Hausdorff dimension of multiple expansions},
	author       = {Zou, Y. and Lu, J. and Komornik, V.},
	year         = 2022,
	journal      = {Journal of Number Theory},
    volume       = 233,
	pages        = { 198--227}
}

@article{13_Makarchuk,
	title        = {Distribution of random variable represented by a binary 
fraction with three identically distributed redundant digits},
	author       = {Pratsiovytyi, M.V. and Makarchuk, O.P.},
	year         = 2022,
	journal      = {Ukrainian Mathematical Journal},
    volume       = 66,
    number       = 1,
	pages        = { 79--88}
}

@article{15_Mykytiuk,
	title        = {The binary numeral system with two redundant digits and its corresponding metric theory of numbers},
	author       = {Mykytyuk, I.O and Pratsiovytyi, M.V.},
	year         = 2003,
	journal      = {Scientific Notes of M.P. Dragomanov National Pedagogical
University. Series: Physical and Mathematical Sciences},
    volume       = 4,
	pages        = { 270--290}
}

\end{document}